\newcommand{\Manoa}{M\=anoa }
\newcommand{\Hawaii}{Hawai\kern.05em`\kern.05em\relax i }
\newtheorem{thm}{Theorem}[section]
\newtheorem{cor}[thm]{Corollary}
\newtheorem{lem}[thm]{Lemma}
\newtheorem{prop}[thm]{Proposition}
\newtheorem{remark}[thm]{Remark}
\let\OLDthebibliography\thebibliography
\renewcommand\thebibliography[1]{
  \OLDthebibliography{#1}
  \setlength{\parskip}{0pt}
  \setlength{\itemsep}{0pt plus 0.3ex}
}
\DeclareMathOperator\Supp{Supp}
\DeclareMathOperator\ssup{sup}
\DeclareMathOperator\iinf{inf}
\DeclareMathOperator\Int{int}
\theoremstyle{definition}
\newtheorem{definition}[thm]{Definition}
\title{Finitely presented simple left-orderable groups in the landscape of Richard Thompson's groups\\
Groupes simples de pr\'esentation finie ordonnables \`a gauche dans le paysage des groupes de Richard Thompson}
\begin{document}
\author{James Hyde and Yash Lodha}

\date{\today}
\maketitle
\maketitle

\begin{abstract}
We construct the first examples of finitely presented simple groups of orientation-preserving homeomorphisms of the real line. Our examples are also of type $F_{\infty}$, have infinite geometric dimension, and admit a nontrivial homogeneous quasimorphism (and hence have infinite commutator width).
\end{abstract}
\begin{otherlanguage}{french}
\begin{abstract}
Nous construisons les premiers exemples de groupes simples de pr\'esentation finie et ordonnables \`a gauche. Nos exemples satisfont \'egalement la propri\'et\'e de finitude plus forte $F_{\infty}$, ont une dimension g\'eom\'etrique infinie, et admettent un quasimorphisme homog\`ene non trivial.
\end{abstract}
\end{otherlanguage}

\section{Introduction.}

A natural line of investigation in modern group theory seeks to understand the landscape of \emph{finitely presented, infinite, simple groups}. The first such examples were discovered by Richard Thompson in $1965$, and are referred to in the literature as Thompson's groups $T$ and $V$ \cite{CFP}. 
In the subsequent decades various ``Thompson-like" examples of finitely presented infinite simple groups emerged in the work of Brown \cite{brown}, Higman \cite{Higman}, Scott \cite{Scott}, Stein \cite{Stein}, Brin \cite{BrinHigher}, Rover-Nekrashevich \cite{SWZ} and more recently, among others, in \cite{LodhaSimple}, \cite{BNR}. A feature of all these examples, including $T$ and $V$,  is that they are not torsion-free. In particular, they cannot act faithfully on the line by orientation-preserving homeomorphisms. 
In their seminal article \cite{BurgerMosesIHES}, Burger and Mozes constructed the first finitely presented simple
torsion-free groups. They obtained a family of such groups, emerging as lattices in products of automorphism groups of regular trees. For each of them it remains unknown whether it admits a nontrivial action by homeomorphisms on the real line.
Finitely presented infinite simple groups were also constructed within the realm of non-affine Kac–Moody groups \cite{CapraceRemy}. However, the groups in \cite{CapraceRemy} are not torsion-free.
In this article we construct the first family of finitely presented left-orderable simple groups, proving:

\begin{thm}\label{thm:main}
There exist finitely presented (and type $\mathbf{F}_{\infty}$) simple groups of orientation-preserving homeomorphisms of $\mathbf{R}$. Equivalently, there exist finitely presented (and type $\mathbf{F}_{\infty}$) simple left-orderable groups.
\end{thm}

Whether finitely generated infinite simple groups of homeomorphisms of $\mathbf{R}$ exist was a longstanding open problem \cite[Problem $16.50$]{Kourovka}.
This was solved by the authors in \cite{HydeLodha}, where we exhibited continuum many (up to isomorphism) examples.
Subsequently, Matte Bon and Triestino in \cite{MBT} provided a more conceptual generalisation of our construction and new classes of examples emerged in the work of the authors with Rivas in \cite{HLR}.
However, none of these examples are finitely presented since they emerge naturally as nontrivial limits in the Grigorchuk space of marked groups. 
Theorem \ref{thm:main} is proved by means of the following new construction.

\begin{definition}\label{ourgroup}



For $n\geq 2$, we define $\Gamma_n\leq \textup{Homeo}^+(\mathbf{R})$ as the group of homeomorphisms $f\in \textup{Homeo}^+(\mathbf{R})$ satisfying the following: 

\begin{enumerate}[itemsep=0pt,parsep=0pt]

\item $f$ is piecewise linear with breakpoints in $\mathbf{Z}[\frac{1}{n(n+1)}]$, and $\mathbf{Z}[\frac{1}{n(n+1)}]\cdot f=\mathbf{Z}[\frac{1}{n(n+1)}]$. (A \emph{breakpoint} is a point where the left and right derivatives do not coincide.)

\item $f$ commutes with the translation $t\mapsto t+1$.

\item For each $x\in \mathbf{R}\setminus \mathbf{Z}[\frac{1}{n(n+1)}]$, there exist (unique) $i,j\in \mathbf{Z}$ such that $x\cdot f'=n^i(n+1)^j$ and one has:
$$i-j=|\mathbf{Z}\cap (x,x\cdot f)|\text{ if } x\leq  x\cdot f\qquad i-j=-|\mathbf{Z}\cap (x\cdot f, x)|\text{ if }x>x\cdot f.$$






\end{enumerate}

\end{definition}


\begin{thm}\label{main}
For each $n\geq 2$, the group $Q_n=[\Gamma_n,\Gamma_n]$ is a finitely presented (and type $\mathbf{F}_{\infty}$) simple group of orientation-preserving homeomorphisms of $\mathbf{R}$.
\end{thm}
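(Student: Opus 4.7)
My plan has three components: identify $Q_n$ as the commutator subgroup and show it is infinite, prove simplicity, and prove finite presentation and type $\mathbf{F}_\infty$.

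I would start by analyzing the abelianization of $\Gamma_n$. The slope condition, with exponents $i,j$ of $n$ and $n+1$ constrained by the ``integer-crossing balance'' $i-j = \pm |\mathbf{Z}\cap(x, x\cdot f)|$, combined with periodicity under $t\mapsto t+1$, suggests that $\Gamma_n^{\mathrm{ab}}$ is captured by finitely many ``slope-at-endpoint'' and ``translation-number'' homomorphisms into $\mathbf{Z}$ (or $\mathbf{Z}^k$). Computing this abelianization explicitly identifies $Q_n := [\Gamma_n,\Gamma_n]$ as the kernel, and infiniteness of $Q_n$ follows by exhibiting bump-like piecewise linear elements supported on a small interval inside a period and noting that these elements lie in the commutator subgroup.

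For simplicity I would follow the now-standard Epstein/Higman/Thompson commutator strategy that has been adapted to many Thompson-like groups. The two main technical inputs are (i) a transitivity/flexibility lemma: given two small intervals $U,V$ with endpoints in $\mathbf{Z}[\tfrac{1}{n(n+1)}]$ that lie in a common period and have compatible lengths (in the appropriate slope sense), there is an element of $Q_n$ carrying $U$ to $V$; and (ii) a fragmentation lemma: every element of $Q_n$ is a product of elements supported in small subintervals of a fundamental domain, with the understanding that any such modification is simultaneously applied under all integer translates. With these two tools, a standard ``displace the support, then take a commutator'' argument (of the type $[f,g] = f^{-1}\cdot f^g$ with $g$ chosen so that $f$ and $f^g$ have disjoint supports) shows that any normal subgroup $N\trianglelefteq Q_n$ containing a single nontrivial element must contain every small-support generator, hence all of $Q_n$.

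For finite presentation and type $\mathbf{F}_\infty$ I would build a Stein--Farley-style contractible CW complex $X$ on which $\Gamma_n$ acts cellularly. The cells should be indexed by ``periodic forests'': admissible subdivisions of $\mathbf{R}/\mathbf{Z}$ by $n$-ary and $(n+1)$-ary splits compatible with the slope balance constraint, together with a labelling that realizes the periodic gluing. Stabilizers of cells should split, up to finite index, as products of lower-complexity Thompson-type groups, so that type $\mathbf{F}_\infty$ is inherited inductively. I would then apply Brown's criterion by endowing $X$ with a height function (for instance, the number of carets in a minimal representative) and showing that the descending links are $k$-connected once height exceeds some $k_0$.

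The hard step, as I see it, is the Brown-criterion connectivity analysis. The constraint $i-j=\pm|\mathbf{Z}\cap(x,x\cdot f)|$ couples the $n$- and $(n+1)$-slopes of a single element across all integer crossings within a period, so local caret-expansion moves used in the standard Stein--Farley argument are no longer independent, and the descending link does not factor as a simple join. This coupling is precisely what forces the infinite geometric dimension advertised in the abstract, so a bespoke matching or discrete Morse-theoretic argument on the poset of periodic subdivisions will be required to verify that the descending links become highly connected; I expect this is where most of the paper's technical effort will go. Once these ingredients are in place, finite presentation is the $n=2$ case of the connectivity estimate, and type $\mathbf{F}_\infty$ then follows for $Q_n$ via the standard fact that a subgroup of finite index in an $\mathbf{F}_\infty$ group is itself $\mathbf{F}_\infty$, applied to the abelian quotient $\Gamma_n/Q_n$.
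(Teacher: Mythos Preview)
Your simplicity outline is broadly in line with the paper's argument: the paper also uses a ``displace the support and commutate'' maneuver, with the refinement that it funnels everything through the simple group $F_{\eta_n}'$ (where $\eta_n=n(n+1)$), which sits naturally inside $Q_n$ as the derived subgroup of the stabilizer of $0$. Once one shows $\langle\langle f\rangle\rangle_{Q_n}\cap F_{\eta_n}'\neq\{1\}$ for any nontrivial $f$, simplicity of $F_{\eta_n}'$ gives $F_{\eta_n}'\leq \langle\langle f\rangle\rangle_{Q_n}$, and a small normal-generation lemma finishes.

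Your $\mathbf{F}_\infty$ strategy, however, diverges substantially from the paper's, and contains a real error at the end. The paper does \emph{not} build a Stein--Farley complex or analyse descending links at all. Instead it passes to the induced circle action of $Q_n$ on $\mathbf{S}^1=\mathbf{R}/\mathbf{Z}$ and lets $Q_n$ act on the full simplex $\mathcal{C}_n$ whose vertex set is $\mathbf{Z}[\tfrac{1}{\eta_n}]\cap\mathbf{S}^1$ (so $m$-simplices are $(m+1)$-element subsets). This complex is trivially contractible, so the connectivity problem you flagged as the hard step simply disappears. The price is that all the work moves into the stabilizer analysis: one must show that $\textup{Pstab}_{Q_n}(J)$ is of type $\mathbf{F}_\infty$ for every finite $J\subset \mathbf{Z}[\tfrac{1}{\eta_n}]\cap[0,1)$, and that there are finitely many orbits of $m$-simplices. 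The latter comes from the known orbit structure of $F_{\eta_n}'$ on $\mathbf{Z}[\tfrac{1}{\eta_n}]\cap(0,1)$; the former is obtained by recognising the stabilizers as (iterated) ascending HNN extensions over copies of $F_{\eta_n}$ and over rigid stabilizers of $\eta_n$-ary intervals in $F_{\eta_n}'$, which are shown to be $\mathbf{F}_\infty$ by a separate HNN argument. So where you expected a combinatorial Morse-theoretic core, the paper instead has an algebraic one.

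Finally, your last sentence is incorrect as stated: there is no reason for $Q_n$ to have finite index in $\Gamma_n$ (and it does not), and in any case knowing that $\Gamma_n$ and the abelian quotient $\Gamma_n/Q_n$ are of type $\mathbf{F}_\infty$ does \emph{not} let you deduce that the kernel $Q_n$ is of type $\mathbf{F}_\infty$; inheritance goes the other way in short exact sequences. The paper proves $\mathbf{F}_\infty$ for $Q_n$ directly via Brown's criterion, never passing through $\Gamma_n$.
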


{\bf First observations.}
We supply some basic facts about our groups $\Gamma_n,Q_n$.
\begin{prop}\label{firstthoughts}
For each $n\in \mathbf{N},n\geq 2$, let $\Gamma_n$ denote the set of homeomorphisms from Definition \ref{ourgroup} and fix $\eta_n=n(n+1)$.
Then the following holds:
\begin{enumerate}[itemsep=0pt,parsep=0pt]
\item $\Gamma_n$ is a subgroup of $\textup{Homeo}^+(\mathbf{R})$.
\item The stabilizer of $0$ in $\Gamma_n$ is isomorphic to the Higman-Thompson group $F_{\eta_n}$. In particular, $\Gamma_n,Q_n$ have infinite geometric dimension.
\item $\Gamma_n,Q_n$ embed in the group of piecewise linear orientation-preserving homeomorphisms of $\mathbf{S}^1=\mathbf{R}/\mathbf{Z}$. 
\item $\Gamma_n,Q_n$ do not have Kazhdan's property (T), and they contain nonabelian free subgroups.
\end{enumerate}
\end{prop}

We recall the definition of the Higman-Thompson groups $F_n$, for each $n\in \mathbf{N},n\geq 2$. 
$F_n$ is the group of orientation-preserving piecewise linear homeomorphisms $f:[0,1]\to [0,1]$ whose slopes lie in $\{n^m\mid m\in \mathbf{Z}\}$ and breakpoints lie in $\mathbf{Z}[\frac{1}{n}]$.
Denote this as the \emph{standard action} of $F_n$ on $[0,1]$.
We define the \emph{$1$-periodic action} of $F_n$ as the unique embedding $F_n\leq \textup{Homeo}^+(\mathbf{R})$ which commutes with integer translations, fixes $\mathbf{Z}$ pointwise, and whose restriction to $[0,1]$ is the standard action of $F_n$.

\begin{proof}[Proof of Proposition \ref{firstthoughts}]
We will need the following definitions.
For $x,y\in\mathbf{R}$, we define $(x,y)_{\mathbf{Z}}$ as: $$(x,y)_{\mathbf{Z}}:=|(x,y)\cap \mathbf{Z}|\text{ if }x\leq y\qquad (x,y)_{\mathbf{Z}}:=-|(y,x)\cap \mathbf{Z}|\text{ if }y< x$$
This satisfies the relation $(x,y)_{\mathbf{Z}}+(y,z)_{\mathbf{Z}}=(x,z)_{\mathbf{Z}}$ for all $x,y,z\in \mathbf{R}\setminus \mathbf{Z}$.
For each $n\in \mathbf{N},n\geq 2$, fix $\Omega_n:=\{n^{k_1}(n+1)^{k_2}\mid k_1,k_2\in \mathbf{Z}\}$ as the multiplicative subgroup of $\mathbf{R}_{>0}$.
We define the homomorphism $\alpha:\Omega_n\to \mathbf{Z}$ as $\alpha(\gamma)=k_1-k_2$ for $\gamma=n^{k_1}(n+1)^{k_2}\in \Omega_n$.
{\bf Part $(1)$}: $\Gamma_n$ clearly contains the identity.
We show that for $f,g\in \Gamma_n$, $fg\in \Gamma_n$, and leave the proof that $f^{-1}\in \Gamma_n$ as an exercise.
The element $fg$ clearly satisfy conditions $(1),(2)$ of Definition \ref{ourgroup}.
Condition $(3)$ for $fg$ is equivalent to the assertion that for all $x\in \mathbf{R}\setminus \mathbf{Z}[\frac{1}{\eta_n}]$,  $(x,x\cdot fg)_{\mathbf{Z}}=\alpha(x\cdot (fg)')$.
Since $f,g\in \Gamma_n$, we know that $(x,x\cdot f)_{\mathbf{Z}}=\alpha(x\cdot f')$ and $(x,x\cdot g)_{\mathbf{Z}}=\alpha(x\cdot g')$.
The chain rule implies that $\alpha(x\cdot (fg)')=\alpha(x\cdot f')+\alpha((x\cdot f)\cdot g')=(x,x\cdot f)_{\mathbf{Z}}+(x\cdot f,x\cdot fg)_{\mathbf{Z}}=(x,x\cdot fg)_{\mathbf{Z}}$.

{\bf Part $(2)$}: Each element of the $1$-periodic action of $F_{\eta_n}$ satisfies Definition \ref{ourgroup}, hence lies in $\Gamma_n$. 
Any element $f\in \Gamma_n$ that fixes $0$ must pointwise fix $\mathbf{Z}$, due to condition $(2)$ of Definition \ref{ourgroup}. Indeed, it follows from Definition \ref{ourgroup} that $f$ is an element in the $1$-periodic action of $F_{\eta_n}$. 
Finally, it is a standard fact that $F_{\eta_n}'$ contains copies of $\oplus_{\mathbf{Z}}\mathbf{Z}$, which has infinite geometric dimension (a feature inherited by overgroups).

{\bf Parts $(3),(4)$}: By Definition \ref{ourgroup}, the groups $\Gamma_n,Q_n$ have the property that each element commutes with all integer translations, yet no integer translation is contained in $\Gamma_n$.
It follows that both actions descend to faithful actions by orientation-preserving piecewise linear homeomorphisms of $\mathbf{S}^1=\mathbf{R}/\mathbf{Z}$.
It follows from the main theorems in \cite{Cornulier} and \cite{LMBT} that $\Gamma_n$ does not have Kazhdan's property (T). It follows from a standard ``ping-pong argument" for dense subgroups of $\textup{Homeo}^+(\mathbf{S}^1)$ \cite{margulis} (see also Theorem $2.3.2$ in \cite{Navas}) that they contain nonabelian free subgroups.
\end{proof}




{\bf An alternative action.}
A remark of James Belk after the first version of the article appeared provides an elegant alternative action of $\Gamma_2$ on $\mathbf{R}_{> 0}$. 
\begin{remark}\label{Belk}
Consider the piecewise linear, orientation reversing homeomorphism
$\kappa:\mathbf{R}_{> 0}\to \mathbf{R}$ that maps $[2^k,2^{k+1}]\mapsto [-(k+1),-k]$ linearly for each $k\in \mathbf{Z}$.
Note that $\kappa$ conjugates $t\mapsto 2t$ to $t\mapsto t-1$.
In fact, $\kappa^{-1}$ conjugates the given action of $\Gamma_2$ on $\mathbf{R}$ to the group consisting of all piecewise linear maps 
$f\in \textup{Homeo}^+(\mathbf{R}_{> 0})$ satisfying: the slope $x\cdot f'$, whenever defined, lies in $\{6^n\mid n\in \mathbf{Z}\}$, the breakpoints of $f$ lie in $\mathbf{Z}[\frac{1}{6}]\cap \mathbf{R}_{\geq 0}$, and $f$ commutes with $t\mapsto 2t$. 
\end{remark}
An anonymous referee pointed out that we can also define $\Gamma_n, n\geq 2$ in this fashion, as follows.
\begin{remark}
Fix $n\in \mathbf{N}, n\geq 2$, and for each $k\in \mathbf{Z}$, let $u_k=\frac{n^{-k}}{n-1}$. Note that $u_k\in \mathbf{Z}[\frac{1}{n}]+\frac{1}{(n-1)}$ and $u_{k-1}-u_k=n^{-k}$ for all $k\in \mathbf{Z}$.
We define a piecewise linear decreasing map $\kappa:\mathbf{R}\to \mathbf{R}_{>0}$ as follows. For each $k\in \mathbf{Z}$, $k\cdot \kappa=u_k$ and $\kappa$ is affine on each $[k-1,k]$ with slope $-n^{-k}$.
Then $\kappa$ conjugates $\Gamma_n$ to the group of piecewise affine homeomorphisms of the interval $\mathbf{R}_{>0}$ with breakpoints in the additive coset $\mathbf{Z}[\frac{1}{n(n+1)}]+\frac{1}{(n-1)}$, whose slopes are integral powers of $n(n+1)$, and that commute with $x\mapsto nx$.
One may check that for $n=2$ this definition agrees with the action in Remark \ref{Belk}.
\end{remark}

{\bf Applications of the main theorem.}
In their seminal $1984$ article \cite{BrownGeoghegan}, Brown and Geoghegan demonstrated that Thompson's group $F$ is a torsion-free, type $\mathbf{F}_{\infty}$ group with infinite geometric dimension. Their theorem has been considerably generalised \cite{brown}\cite{Stein}\cite{Witzel}, however, no such example was found that is also simple. 
Our main result provides the first example of a type $\mathbf{F}_{\infty}$, torsion-free, simple group with infinite geometric dimension, answering a question of Zaremsky from \cite{ZaremskyOF}. 

Each group $Q_n$ admits a nontrivial homogeneous quasimorphism $Q_n\to \mathbf{R}$ given by $f\mapsto \lim_{m\to \infty} \frac{0\cdot f^m}{m}$.
It follows that $Q_n$ has \emph{infinite commutator width}: for each $m\in \mathbf{N}$, there is an $f\in Q_n$ which cannot be expressed as a product of fewer than $m$ commutators of elements of $Q_n$.
Whether finitely presented simple groups with infinite commutator width exist had been asked as Problem $14.13.(b)$ in the Kourovka notebook \cite{Kourovka}, and the only known solutions before were the aforementioned examples emerging in the work of Caprace and R\'{e}my \cite{CapraceRemy}, as demonstrated in \cite{CapraceFujiwara}. Our groups $\{Q_n\}_{n\geq 2}$ provide the first torsion-free solutions.
\begin{cor}
There exists a finitely presented torsion-free simple group with
an unbounded quasimorphism, and, in particular, with infinite commutator width.
\end{cor}
The reader should also contrast this with the fact that groups in the family of Burger and Mozes \cite{BurgerMosesIHES} do not admit nontrivial homogeneous quasimorphisms \cite{BurgerMonod}; it is unknown whether they have finite commutator width.

Much of the technical portion of this paper is devoted to proving that the groups $\Gamma_n,Q_n$ are of type $\mathbf{F}_{\infty}$. Our main tool in this direction is a special case of a criterion due to Ken Brown (Proposition \ref{brownscriterion}).

\textbf{Acknowledgements:} The authors thank the anonymous referee for their feedback that helped improve the exposition. 
The authors thank Francesco Fournier-Facio for his careful reading of several early drafts of this paper, and thank Matt Brin and James Belk for their important remarks. 
The second author was partially supported by the NSF CAREER award 2240136.

\section{Preliminaries.}

All actions will be right actions.
For a group $G$ and $f,g\in G$, denote $G':=[G,G], G'':=[G',G']$, $f^g:=g^{-1}fg$ and $[f,g]:=f^{-1}g^{-1}fg$. 
For $X\subset G$, denote by $\langle\langle X\rangle \rangle_G$ as the smallest normal subgroup of $G$ containing $X$.
Throughout the paper, we fix $\eta_n=n(n+1)$ for each $n\in \mathbf{N},n\geq 2$.
We shall often identify a given group with some specified action on a $1$-manifold without choosing notation for the action.
The nature of the action will be made clear from the context.


Let $M$ be a connected $1$-manifold, and $G\leq \textup{Homeo}^+(M)$.
Given $f\in G$, define the \emph{support of $f$} as $\Supp(f)=\{x\in M\mid x\cdot f\neq x\}$.
Given $I\subseteq M$, denote 
$\textup{Rstab}_{G}(I)=\{f\in G\mid \Supp(f)\subseteq I\}$.
For $x\in M$, whenever they exist, we denote $x\cdot f',x\cdot f'_-,x\cdot f'_+$ as the derivative, left derivative and right derivative, respectively.
The action of $G$ on $M$ is \emph{minimal} if all orbits are dense in $M$. 
An element $f\in \textup{Homeo}^+(\mathbf{R})$ is said to be \emph{$1$-periodic} if it commutes with all integer translations. A subgroup $G\leq \textup{Homeo}^+(\mathbf{R})$ is said to be $1$-periodic if every element of $G$ is $1$-periodic.

\begin{lem}\label{generalminimal}
Let $G\leq \textup{Homeo}^+(\mathbf{R})$ be a $1$-periodic subgroup such that: 
\begin{enumerate}[itemsep=0pt,parsep=0pt]
\item For each $n\in \mathbf{Z}$, every $x\in (n,n+1)$ and every nonempty open set $U\subset (n,n+1)$, there is an $f\in G$ such that $x\cdot f\in U$.
\item $\mathbf{Z}$ is not $G$-invariant.
\end{enumerate}
Then the action of $G$ on $\mathbf{R}$ is minimal. If such a $G$ does not preserve a Radon measure on $\mathbf{R}$, then the following also holds. For each closed interval $I\subset \mathbf{R}, |I|<1$ and any nonempty open interval $U\subset \mathbf{R}$, there is an $f\in G$
such that $I\cdot f\subset U$.
\end{lem}
\begin{proof}
First we show minimality.
Assume by way of contradiction that there is a proper closed $G$-invariant set $X\subset \mathbf{R}$.
Let $S_1$ be the set of left endpoints of the connected components of $\mathbf{R}\setminus X$ and $S_2$ be the set of right endpoints of these components. Using our hypothesis, since $S_1,S_2$ are $G$-invariant, it is easy to see that $S_1,S_2\subset \mathbf{Z}$.
Then $G$-invariance of $S_1,S_2$ and the $1$-periodicity of $G$ implies that $\mathbf{Z}$ is $G$-invariant, a contradiction.
The second statement follows from a general result (see Theorem $3.5.19$ in \cite{GOD}).
\end{proof}

The finiteness properties type $\mathbf{F}_{n}$ and type $\mathbf{F}_{\infty}$ are of fundamental importance in geometric group theory since they are quasi-isometry invariants of groups \cite{Alonso},\cite{Geoghegan}. 
A group is said to be \emph{of type $\mathbf{F}_n$} if it admits an 
Eilenberg-Maclane complex with a finite $n$-skeleton, and is \emph{of type $\mathbf{F}_{\infty}$} if it is of type $F_n$ for each $n\in \mathbf{N}$.
We recall a special case of what is called Brown's criterion
(Proposition $1.1$ in \cite{brown}).

\begin{prop}\label{brownscriterion}
 Let $\Gamma$ be a group that acts on a cell complex $X$
by cell permuting homeomorphisms such that $X$ is contractible, $X/ \Gamma$ has finitely many cells in each dimension, and the pointwise stabilizer of each cell is of type $\mathbf{F}_{\infty}$.
 Then $\Gamma$ is of type $\mathbf{F}_{\infty}$.
\end{prop}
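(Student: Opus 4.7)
The plan is to build an explicit $K(\Gamma,1)$ with finitely many cells in each dimension by replacing the (possibly non-free) $\Gamma$-CW complex $X$ with a free one $Y$ that is still contractible. The intuition is to ``fatten'' each cell orbit of $X$ by a contractible free model of its stabilizer so that the resulting $\Gamma$-action on $Y$ becomes free, and then take $Y/\Gamma$ as the classifying space.

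Concretely, I first choose orbit representatives $\{\sigma_\alpha\}$ for the cell orbits of $X$ under $\Gamma$; by hypothesis (2), there are only finitely many such $\sigma_\alpha$ in each dimension. Writing $\Gamma_\alpha$ for the pointwise stabilizer of $\sigma_\alpha$, hypothesis (3) lets me fix, for each $\alpha$, a free contractible $\Gamma_\alpha$-CW complex $E_\alpha$ with finitely many $\Gamma_\alpha$-orbits of cells in each dimension. I then construct $Y$ by induction on skeleta: at each stage I attach, for each $(p+1)$-cell orbit representative $\sigma_\alpha$ of $X$, a copy of $\Gamma\times_{\Gamma_\alpha}(E_\alpha\times D^{p+1})$ via a $\Gamma_\alpha$-equivariant lift of the boundary attaching map $\partial\sigma_\alpha\to X^{(p)}$ through the comparison map $Y^{(p)}\to X^{(p)}$ built so far. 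The resulting $Y$ is by construction a free $\Gamma$-CW complex, and the projection $Y\to X$ that collapses each $E_\alpha$-factor to a point is a $\Gamma$-equivariant homotopy equivalence; combined with hypothesis (1), this forces $Y$ to be contractible, so $Y/\Gamma$ is a $K(\Gamma,1)$. Its cells in dimension $d$ are indexed by pairs consisting of a $p$-cell orbit of $X$ together with a $(d-p)$-cell orbit of the corresponding $E_\alpha$, of which there are finitely many by hypotheses (2) and (3); hence $\Gamma$ is of type $\mathbf{F}_\infty$.

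The main obstacle is the inductive step in the construction of $Y$: at each stage I need to lift the attaching map of a cell orbit $(p+1)$-cell $\sigma_\alpha$ in $X$ to a $\Gamma_\alpha$-equivariant attaching map landing in the already-constructed $Y^{(p)}$, compatibly with the lifts already chosen on the boundary cells. This is precisely where the contractibility of the $E_\beta$'s for lower-dimensional representatives $\sigma_\beta$ enters the argument, through standard equivariant obstruction theory: the fibers of $Y^{(p)}\to X^{(p)}$ are products of (contractible) fattenings, so any $\Gamma_\alpha$-equivariant map into $X^{(p)}$ admits an equivariant lift to $Y^{(p)}$, unique up to $\Gamma_\alpha$-equivariant homotopy. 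Once this equivariant gluing is set up carefully, the remaining verifications (freeness of the $\Gamma$-action on $Y$, the homotopy equivalence $Y\to X$, and the cell count of $Y/\Gamma$) are essentially bookkeeping.
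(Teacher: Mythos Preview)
The paper does not supply its own proof of this proposition: it is quoted as a special case of Brown's criterion, with a pointer to Proposition~1.1 in \cite{brown}, and is used as a black box in Section~\ref{sectionfinfinity}. So there is no in-paper argument to compare your proposal against.

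That said, your sketch is a recognisable and essentially correct route to the result: fatten each cell orbit of $X$ by a finite-type model of $E\Gamma_\alpha$ for the relevant stabiliser, glue inductively, and read off a $K(\Gamma,1)$ with finite skeleta from the quotient. Two places deserve tightening. First, the hypotheses speak only of \emph{pointwise} stabilisers, whereas your gluing implicitly assumes that the setwise stabiliser of each cell already fixes it pointwise (otherwise the induced $\Gamma$-action on $\Gamma\times_{\Gamma_\alpha}(E_\alpha\times D^{p+1})$ need not be free, and your cell count on the quotient need not be accurate); the standard fix is to pass to the barycentric subdivision first, which preserves hypotheses (1)--(3) and forces cell stabilisers to act trivially on their cells. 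Second, the map $Y^{(p)}\to X^{(p)}$ is not literally a fibration with contractible fibres, so ``equivariant obstruction theory'' is a slight oversell; the clean statement to carry through the induction is that this map is a $\Gamma$-equivariant homotopy equivalence at each stage, which is enough to produce the $\Gamma_\alpha$-equivariant lift of the attaching map you need. With those two adjustments your argument goes through and matches the classical proof.
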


The following is a standard fact (see \cite{Geoghegan} for a proof).

\begin{prop}\label{extensionfiniteness}
 Consider a group extension $1\to N\to G\to H\to 1$. If $N,H$ are of type $\mathbf{F}_{\infty}$ then $G$ also has type $\textbf{F}_{\infty}$. In particular, a finite direct product of type $\textbf{F}_{\infty}$ groups is also of type $\textbf{F}_{\infty}$.
\end{prop}






Given a group $H$ and an isomorphism $\phi:H\to K$, where $K<H$ is a proper subgroup, the group $\langle H, t\mid t^{-1}ht=\phi(h)\text{ for }h\in H\rangle$ is called an \emph{ascending HNN extension} with \emph{base group} $H$. 
The following is a criterion to verify whether a group admits such a structure (See Lemma $3.1$ in \cite{GMSW} for a proof):
\begin{lem}\label{AHNN}
Let $G$ be a group that satisfies the following. There exist subgroups $H_1< H_2< G$ and an element $f\in G$ such that $f^{-1}H_2f=H_1$, no nontrivial power of $f$ lies in $H_2$,
and $\langle H_2,f\rangle=G$.
Then $G$ admits the structure of an ascending HNN extension with base group $H_2$.
\end{lem}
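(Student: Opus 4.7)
The plan is to construct the abstract ascending HNN extension
\[
\tilde G \;=\; \langle H_2,\, t \mid t^{-1}ht = \phi(h),\ h\in H_2\rangle,
\]
where $\phi\colon H_2 \to H_1$ is the isomorphism $h\mapsto f^{-1}hf$ (well-defined and bijective by condition (1)), and to exhibit a group isomorphism $\psi\colon \tilde G \to G$. I would define $\psi$ on generators by $\psi|_{H_2}=\mathrm{id}$ and $\psi(t)=f$; since $f^{-1}hf=\phi(h)$ holds in $G$ by construction, the defining relations are satisfied, and $\psi$ extends to a homomorphism that is surjective by condition (3). The content of the lemma is thus the injectivity of $\psi$.

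For injectivity I would exploit the standard semidirect product decomposition of an ascending HNN extension and mirror it on the $G$ side. Inside $\tilde G$, the relation $h = t\phi(h)t^{-1}$ yields an ascending chain $H_2 \subset tH_2t^{-1} \subset t^2H_2t^{-2} \subset \cdots$; letting $K$ denote its union, the HNN normal form theorem (Britton's lemma) gives $\tilde G = K\rtimes\langle t\rangle$ with $t$ of infinite order. On the $G$ side, condition (1) gives the analogous ascending chain $H_2 \subset fH_2f^{-1} \subset f^2H_2f^{-2} \subset \cdots$; let $K_G$ be its union. Then $G = \langle K_G, f\rangle$ is immediate from condition (3), while condition (2) forces $\langle f\rangle\cap K_G = \{1\}$: any $f^n \in f^m H_2 f^{-m}$ gives, upon conjugating by $f^{-m}$ and using that $f$ commutes with its own powers, $f^n \in H_2$, contradicting condition (2) unless $n=0$. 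Hence $G = K_G\rtimes\langle f\rangle$.

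It remains to identify $\psi|_K\colon K \to K_G$ as an isomorphism. Under the natural identifications $t^kH_2t^{-k}\cong H_2$ and $f^kH_2f^{-k}\cong H_2$, the inclusion between consecutive terms of each ascending chain is the map $\phi$ (computed from $h = t\phi(h)t^{-1}$ on one side and $h = f\phi(h)f^{-1}$ on the other). So both $K$ and $K_G$ realize the direct limit $\varinjlim(H_2\xrightarrow{\phi} H_2\xrightarrow{\phi} H_2\to\cdots)$, and $\psi|_K$ is the identity on each level, hence an isomorphism. Combined with $\psi(t)=f$, this upgrades to an isomorphism $\tilde G = K\rtimes\langle t\rangle \xrightarrow{\sim} K_G\rtimes\langle f\rangle = G$. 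The technical point I expect to require the most care is the justification that $K$ sits faithfully inside $\tilde G$ (so that $K$ really realizes this direct limit rather than a quotient of it); this is precisely the content of the HNN embedding/normal form theorem in the ascending setting, and everything else is a routine semidirect product computation.
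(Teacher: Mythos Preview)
Your argument is correct. The paper itself does not prove this lemma; it simply refers the reader to Lemma~3.1 of \cite{GMSW}. What you have written is essentially the standard proof: build the abstract ascending HNN extension $\tilde G$, map it onto $G$, and verify injectivity by matching the semidirect product decompositions $K\rtimes\langle t\rangle$ and $K_G\rtimes\langle f\rangle$ level by level as direct limits along $\phi$. The one nontrivial input you flag---that $H_2$ (and hence each $t^kH_2t^{-k}$) embeds faithfully in $\tilde G$, so that $K$ really is the abstract direct limit---is exactly Britton's lemma for the ascending case, and you are right to isolate it as the substantive step. Everything else is routine.
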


Finiteness properties of ascending HNN extensions are well behaved (see the end of section $2$ in \cite{BDH}):
\begin{prop}\label{AHNNF}
Let $G$ be an ascending HNN extension with base group $H$. If $H$ has type $\mathbf{F}_{\infty}$, then $G$ has type $\mathbf{F}_{\infty}$.
\end{prop}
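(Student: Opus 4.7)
The plan is to apply Brown's criterion (Proposition \ref{brownscriterion}) to the natural simplicial action of $G$ on its Bass-Serre tree $T$. Writing $G=\langle H,t\mid t^{-1}ht=\phi(h), h\in H\rangle$ with $\phi:H\to K\leq H$ an isomorphism onto a proper subgroup, Bass-Serre theory produces a tree $T$ on which $G$ acts simplicially with one vertex orbit and one edge orbit, so the quotient graph $T/G$ has exactly one vertex and one edge. Since $T$ is a tree it is contractible, giving the first two hypotheses of Brown's criterion.

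It remains to check that the pointwise stabilizer of every cell of $T$ is of type $\mathbf{F}_{\infty}$. Vertex stabilizers are $G$-conjugates of $H$, which is $\mathbf{F}_{\infty}$ by hypothesis. Edge stabilizers are $G$-conjugates of the edge group, which is $K=\phi(H)$; since $\phi$ is an isomorphism, $K\cong H$ is also $\mathbf{F}_{\infty}$. A short check shows that $G$ acts without edge inversions on $T$: inverting the standard edge between the vertices $H$ and $tH$ would force a relation of the form $t^2\in H$, contradicting the fact that no nontrivial power of the stable letter lies in $H$ for a (proper ascending) HNN extension. Hence pointwise and setwise stabilizers of edges coincide, and alternatively one could always pass to the barycentric subdivision to enforce this.

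With all three hypotheses of Proposition \ref{brownscriterion} verified, the criterion yields that $G$ is of type $\mathbf{F}_{\infty}$. The substance of the argument is contained entirely in the construction and basic properties of the Bass-Serre tree (contractibility, description of stabilizers, absence of inversions), which I would simply quote from Serre's \emph{Trees} rather than reprove. The anticipated obstacle is therefore not technical but conceptual: once the Bass-Serre machinery is in hand, the statement reduces to a one-line application of Brown's criterion, and no special property of the ascending case is needed beyond the injectivity of $\phi$ and the identification $K\cong H$.
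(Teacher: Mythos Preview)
Your proposal is correct and is in fact the standard argument. The paper itself does not supply a proof of this proposition: it merely states the result and refers the reader to the discussion at the end of Section~2 of \cite{BDH}. So there is nothing to compare against beyond noting that what you have written is essentially the argument behind that citation.

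Two small remarks. First, your justification for the absence of edge inversions is slightly garbled: in the standard Bass--Serre construction the tree is built with an orientation on edges that the group preserves by design, so no argument involving $t^2\in H$ is needed (and indeed that particular deduction is not quite right). Your fallback to the barycentric subdivision is of course a clean way to sidestep the issue entirely. Second, you are right that nothing specific to the \emph{ascending} hypothesis is used: the same Bass--Serre/Brown argument shows that any HNN extension with base group and associated subgroups of type $\mathbf{F}_\infty$ is itself of type $\mathbf{F}_\infty$, and more generally the same holds for fundamental groups of finite graphs of groups with $\mathbf{F}_\infty$ vertex and edge groups.
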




\section{The proof.}

\subsection{The groups $F_n,Q_n,\Gamma_n$ and their $1$-periodic actions.}
The Higman-Thompson groups $F_n$ will play an essential role in our proofs.
Recall from Proposition \ref{firstthoughts} that the stabilizer of $0$ in $\Gamma_n$ emerges as the $1$-periodic action of $F_{\eta_n}$ on $\mathbf{R}$.
In \cite{brown}, Brown proved the following. 

\begin{thm}\label{brownthm}
For each $n\geq 2$, the group $F_n$ is of type $\mathbf{F}_{\infty}$.
\end{thm}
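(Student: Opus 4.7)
The plan is to apply Brown's criterion, in a filtered form combined with an induction on the finiteness degree, to a suitable contractible complex acted upon by $F_n$. I would construct the complex $X_n$ as a cube complex in the spirit of Stein--Farley: vertices are $n$-adic partitions of $[0,1]$, and a $d$-cube corresponds to a pair of partitions $\mathcal{P} \leq \mathcal{Q}$ where $\mathcal{Q}$ is obtained from $\mathcal{P}$ by applying pairwise-disjoint elementary $n$-ary subdivisions to $d$ of the intervals of $\mathcal{P}$. The group $F_n$ acts on $X_n$ via its natural action on partitions, and since it acts transitively on $n$-adic partitions of each fixed cardinality, there are only finitely many orbits of cubes in each dimension.

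The two structural facts I would establish are: (i) $X_n$ is contractible, via order-theoretic / discrete Morse methods on the underlying poset of partitions, following the prototype argument of Brown--Geoghegan for $n=2$; and (ii) the pointwise stabilizer of a cube associated to a partition with $m$ intervals is isomorphic to $F_n^m$, the direct product of $m$ copies of $F_n$ (one factor per subinterval). A naive application of Proposition \ref{brownscriterion} is circular: it would require $F_n$ to be of type $\mathbf{F}_{\infty}$ in order to conclude that each stabilizer $F_n^m$ is of type $\mathbf{F}_{\infty}$ (via iterated application of Proposition \ref{extensionfiniteness}(1)), which is precisely what is to be proved.

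To break this circularity, I would prove by induction on $k$ that $F_n$ is of type $\mathbf{F}_k$, using the filtered form of Brown's criterion from \cite{brown}. The base case $k \leq 1$ is the standard finite generation of $F_n$. For the inductive step, let $X_n^{(j)} \subseteq X_n$ be the subcomplex spanned by cubes whose partitions have at most $j$ intervals; this is $F_n$-cocompact. The inductive hypothesis, combined with the degree-$(k-1)$ analogue of Proposition \ref{extensionfiniteness}(1), guarantees that every cube stabilizer $F_n^m$ is of type $\mathbf{F}_{k-1}$. Brown's filtered criterion then reduces the $\mathbf{F}_k$ property of $F_n$ to the requirement that the inclusions $X_n^{(j)} \hookrightarrow X_n^{(j+1)}$ become eventually $(k-1)$-connected, or equivalently that the descending links at vertices of $X_n$ become increasingly highly connected.

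The main obstacle is establishing this increasing connectivity of the descending links. The descending link at a partition $\mathcal{P}$ (with respect to the interval-count height function) is a simplicial complex encoding which subsets of intervals can be simultaneously contracted by inverse $n$-ary moves; it is a matching-type complex on a linear arrangement. The core technical step is a discrete Morse argument showing that the connectivity of this complex tends to infinity with the number of intervals of $\mathcal{P}$. This is where the bulk of the work in \cite{brown} lies; all other ingredients --- the construction of $X_n$, its contractibility, the product structure of cube stabilizers, and the cocompactness of the filtration $\{X_n^{(j)}\}$ --- are essentially formal once the right complex is in hand.
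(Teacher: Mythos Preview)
The paper does not actually prove Theorem~\ref{brownthm}. It is stated as a preliminary result with the attribution ``In \cite{brown}, Ken Brown proved the following'' (with the $n=2$ case credited to Brown--Geoghegan \cite{BrownGeoghegan}), and no argument is given. So there is no proof in the paper against which to compare your proposal.

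That said, your outline is a reasonable sketch of the classical argument, broadly in the spirit of \cite{brown} and Stein's refinement. A couple of minor remarks: Brown's original complex in \cite{brown} is formulated in terms of a poset of bases rather than the cube complex you describe (the cubical viewpoint is closer to Stein--Farley), though the arguments are essentially interchangeable. Your identification of the circularity and the resolution via the filtered criterion with induction on $k$ is exactly the standard device, and you are right that the descending-link connectivity computation is where the genuine work lies. For the purposes of the present paper, however, none of this is needed: the authors simply import the result as a black box and build on it.
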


In \cite{brown}, Brown isolated the presentation 
$F_n:=\langle(f_i)_{i\in \mathbf{N}}\mid f_i^{-1} f_jf_i=f_{j+n-1}\text{ for }i<j\rangle$
and proved:

\begin{thm}\label{abelianization}
The group $F_n$ is $n$-generated and its abelianization is $\mathbf{Z}^n$. The derived subgroup $F_n'$ is simple and every proper normal subgroup of $F_n$ contains $F_n'$.
\end{thm}

Recall that an action of a group $G$ on a connected $1$-manifold $M$ by homeomorphisms is \emph{proximal} if for every proper compact subset $U\subset M$ and nonempty open subset $V\subset M$,
there is an element $f\in G$ such that $U\cdot f\subset V$.
The following standard facts also emerge in \cite{brown}. 

\begin{lem}\label{Fnproximal}
The standard actions of $F_n,F_n'$ on $(0,1)$ are proximal. Moreover, for each $a,b\in [0,1]\cap \mathbf{Z}[\frac{1}{n}], a<b$, it holds that
$\textup{Rstab}_{F_n}([a,b])\cong F_n$.
\end{lem}

\begin{proof}
The statement concerning proximality was proved in \cite{brown}.
The latter statement follows from observing that for each such $a,b$, there is an $m\in \mathbf{N}$ such that the following holds. 
There is a piecewise linear map $\phi:[a,b]\to [0,m]$ that conjugates $\textup{Rstab}_{F_n}([a,b])$ to the group of piecewise linear homeomorphisms of $[0,m]$ with breakpoints
in $\mathbf{Z}[\frac{1}{n}]\cap [0,m]$ and slopes in $\{n^k\mid k\in \mathbf{Z}\}$. The latter is isomorphic to $F_n$, from Proposition $4.1$ in \cite{brown}.
\end{proof}

Using this, we prove the following.

\begin{prop}\label{ourgroupminimal}
The actions of $\Gamma_n,Q_n$ on $\mathbf{R}$ are minimal. Moreover, for each closed interval $I\subset \mathbf{R}, |I|<1$ and any nonempty open interval $U\subset \mathbf{R}$, there is an $f\in Q_n$
such that $I\cdot f\subset U$.
\end{prop}

\begin{proof}
It suffices to verify the hypothesis of Lemma \ref{generalminimal} for $Q_n$.
By definition, $Q_n$ is $1$-periodic. 
Since the $1$-periodic copy of $F_{\eta_n}'$ is a subgroup of $Q_n$, $Q_n$ satisfies condition $(1)$ of the Lemma \ref{generalminimal} using Lemma \ref{Fnproximal}.
For $I:=[-\frac{1}{\eta_n}, \frac{n}{\eta_n}]$, we define $\lambda:I\to I$ as follows:

$$\lambda:=\begin{cases}
[\frac{-\eta_n}{\eta_n^2},\frac{-\eta_n+1}{\eta_n^2}]\mapsto [\frac{-\eta_n}{\eta_n^2},0] & \text{ linear with slope } \eta_n.\\
[\frac{-\eta_n+1}{\eta_n^2},0]\mapsto [0, \frac{n\eta_n-n}{\eta_n^2}]&\text{ linear with slope }n.\\
[0,\frac{n\eta_n}{\eta_n^2}]\mapsto [\frac{n\eta_n-n}{\eta_n^2},\frac{n\eta_n}{\eta_n^2}]
&\text{ linear with slope }\frac{1}{\eta_n}.
\end{cases}$$




Define the $1$-periodic homeomorphism $f:\mathbf{R}\to \mathbf{R}$ satisfying $\Supp(f)=\Int(I)+\mathbf{Z}$ and $f\restriction I=\lambda$. Clearly, $f\in \Gamma_n$ and $\mathbf{Z}$ is not $f$-invariant. 
If $\mathbf{Z}$ were $Q_n$-invariant, then since $f Q_nf^{-1}=Q_n$ (where $f$ is as above), $\mathbf{Z}\cdot f$ would also be $Q_n$-invariant. This is a contradiction since by Lemma \ref{Fnproximal} the $F_{\eta_n}'$-orbit of $0\cdot f$ is dense in $(0,1)$. 
\end{proof}

Define the map $\theta_n:\mathbf{Z}[\frac{1}{n}]\to \mathbf{Z}/(n-1)\mathbf{Z}$ given by $\frac{k}{n^m}\mapsto k\text{ (mod }n-1)$ for each $k,m\in \mathbf{Z}$.
The following proposition is likely known, but we provide a brief proof for the convenience of the reader.

\begin{prop}\label{orbits}
Fix $n\in \mathbf{N},n\geq 2$. For each $k\geq 1$, we consider two linearly ordered $k$-tuples 
$(x_1,...,x_k)$ and $(y_1,...,y_k)$ in $\mathbf{Z}[\frac{1}{n}]\cap (0,1)$ (here the linear order is induced from the natural one on $\mathbf{R}$).
Then the following are equivalent:
\begin{enumerate}[itemsep=0pt,parsep=0pt]
\item There is an element $f\in F_n$ such that $x_i\cdot f=y_i$ for each $1\leq i\leq k$.
\item There is an element $f\in F_n'$ such that $x_i\cdot f=y_i$ for each $1\leq i\leq k$.
\item $\theta_n(x_i)=\theta_n(y_i)$ for each $1\leq i\leq k$.
\end{enumerate}
It follows that for every $k\in \mathbf{N}\setminus \{0\}$, $F_n'$ has finitely many orbits on $(\mathbf{Z}[\frac{1}{n}]\cap (0,1))^k$.
\end{prop}

We will need the following lemmas.

\begin{lem}\label{lemorbits1}
Every $F_n$-orbit in $\mathbf{Z}[\frac{1}{n}]\cap (0,1)$ meets $W=\{\frac{1}{n},...,\frac{(n-1)}{n}\}$.
\end{lem}
\begin{proof}
Given $x\in \mathbf{Z}[\frac{1}{n}]$, we will construct an element $f\in F_n$ such that $x\cdot f\in W$.
Let $x=\frac{k}{n^{l}}$ and let $k=(n-1)k_1+k_2$ where $k_2\in \{1,...,n-1\}$.
Consider the following ordered partition of $[0,1)$: 
$$\mathcal{I}=\{I_1=\left[0,\frac{1}{n^l}\right),I_2=\left[\frac{1}{n^l},\frac{2}{n^l}\right),...,I_{n^l-1}=\left[\frac{n^l-2}{n^l},\frac{n^l-1}{n^l}\right),I_{n^l}=\left[\frac{n^l-1}{n^l},1\right)\}$$
Given an interval of the form $J=\left[\frac{i}{n^j},\frac{i+1}{n^j}\right)$, we denote by $\Xi(J)$ as the \emph{regular $n$-ary subdivision}:
$$J_1=\left[\frac{in}{n^{j+1}},\frac{in+1}{n^{j+1}}\right),J_2=\left[\frac{in+1}{n^{j+1}},\frac{in+2}{n^{j+1}}\right),...,J_n=\left[\frac{in+n-1}{n^{j+1}},\frac{i+1}{n^j}\right)$$
Applying the operation $\Xi$ to such an interval in a partition of intervals increases the size of the partition by $n-1$.
Build an ordered partition $\mathcal{K}=\{K_1,...,K_{n^l}\}$ of $[0,1)$ from the trivial partition $[0,1)$ as follows. First apply, one by one, the operation $\Xi$ on the leftmost interval $k_1+1$ times
and then on the rightmost interval $\frac{n^{l}-1}{n-1}-k_1-1$ times.
The element $f\in F_n$ that maps each $I_j$ to $K_j$ linearly, maps $x$ in $W$.
\end{proof}

\begin{lem}\label{lemorbits2}
For each $f\in F_n$ and $x\in \mathbf{Z}[\frac{1}{n}]\cap (0,1)$ it holds that $\theta_n(x\cdot f)=\theta_n(x)$.
Moreover, if $f:\mathbf{R}_{\geq 0}\to \mathbf{R}_{\geq 0}$ is a piecewise linear homeomorphism with slopes in 
$\{n^k\mid k\in \mathbf{Z}\}$ and breakpoints in $\mathbf{Z}[\frac{1}{n}]$, then $\theta_n(x\cdot f)=\theta_n(f)$ for each $x\in \mathbf{R}_{\geq 0}$.
\end{lem}

\begin{proof}
Let $x\in \mathbf{Z}[\frac{1}{n}]\cap (0,1)$.
Let $0<t_1<...<t_l<1$ be the set of breakpoints of $f$ and denote $t_0=0,t_{l+1}=1$. 
Let $x\in (t_i,t_{i+1}]$ for some $0\leq i\leq l$.
We show that $\theta_n(x)=\theta_n(x\cdot f)$ by induction on $i$.
For the case $i=0$, this follows since $f\restriction [0,t_1]$ is of the form $t\mapsto n^kt$ for some $k\in \mathbf{Z}$.
For the inductive step, the inductive hypothesis assumes that $\theta_n(t_i)=\theta_n(t_i\cdot f)$.
Since $x\cdot f=n^k(x-t_i)+(t_i\cdot f)$ for some $k\in \mathbf{Z}$, it follows that $\theta_n(x\cdot f)=\theta_n(n^k(x-t_i)+(t_i\cdot f))=\theta_n(n^k(x-t_i))+\theta_n(t_i\cdot f)=\theta_n(x-t_i)+\theta_n(t_i)=\theta_n(x-t_i+t_i)=\theta_n(x)$.
The more general statement follows from a similar inductive argument.
\end{proof}

\begin{lem}\label{lemorbits3}
The following holds:
\begin{enumerate}[itemsep=0pt,parsep=0pt]
\item $F_n$ has $n-1$ orbits on $\mathbf{Z}[\frac{1}{n}]\cap (0,1)$ which are precisely the fibers of $\theta_n\restriction \mathbf{Z}[\frac{1}{n}]\cap (0,1)$.
\item Given $I=(\frac{(n-1)k}{n^l},\frac{(n-1)k+1}{n^l})\subset [0,1]$, 
 $\textup{Rstab}_{F_n}(I)$ has $n-1$ orbits on $\mathbf{Z}[\frac{1}{n}]\cap I$ which are the fibers of $\theta_n\restriction \mathbf{Z}[\frac{1}{n}]\cap I$.
\end{enumerate}
\end{lem}

\begin{proof}
The first part follows immediately from applying Lemmas \ref{lemorbits1}, \ref{lemorbits2}.
We show the second part. The action $\textup{Rstab}_{F_n}(I)$ is topologically conjugate to the standard action of $F_n$ on $(0,1)$ by the homeomorphism that maps $I$ linearly to $[0,1]$.
Moreover, this map is $\theta_n$-equivariant, and hence the assertion in the first part translates naturally to the second part.
\end{proof}



\begin{lem}\label{lemorbits4}
For all $x,y,z\in \mathbf{Z}[\frac{1}{n}]\cap (0,1)$ such that $x,y<z$ and $\theta_n(x)=\theta_n(y)$, there exists $f\in \textup{Rstab}_{F_n}(0,z)$ so that $x\cdot f=y$. 
\end{lem}

\begin{proof}
Choose $k\in \mathbf{N}$ such that $\frac{1}{n^k}<z$.
We find $f_1\in F_n$ such that $x_1=x\cdot f_1,y_1=y\cdot f_1\in [0,\frac{1}{n^k}]$ and $f_1\restriction [z,1]=id$.
From part $(2)$ of Lemma \ref{lemorbits3}, it follows that there is an $f_2\in \textup{Rstab}_{F_n}[0,\frac{1}{n^k}]$ such that $x_1\cdot f_2=y_1$, and hence $f=f_1f_2f_1^{-1}$ is the required element.
\end{proof}

\begin{proof}[Proof of Proposition \ref{orbits}]
The assertion $(1)\implies (3)$ is provided by Lemma \ref{lemorbits2} and $(2)\implies (1)$ is clear.
First, we prove $(3)\implies (1)$ by induction on $k$. The case $k=1$ is part $(1)$ of Lemma \ref{lemorbits3}.
From the inductive hypothesis, we find $f_1\in F_n$ such that $x_i\cdot f_1=y_i$ for $2\leq i\leq k$.
Since $x_1\cdot f_1, y_1\in [0,y_2)$, using Lemma \ref{lemorbits4} we find $f_2\in \textup{Rstab}_{F_n}(0,y_2)$ such that $(x_1\cdot f_1)\cdot f_2=y_1$.
The required element is $f=f_1f_2$.

Now, we prove that $(3)\implies (2)$.
Let $I\subset (0,1)$ be a closed interval containing $\{x_i,y_i\mid 1\leq i\leq k\}$.
Using proximality of the action of $F_n$ on $(0,1)$ (Lemma \ref{Fnproximal}), we find $f_1\in F_n$ such that $I\cdot f_1\in (\frac{n-1}{n^2},\frac{1}{n})$.
Using part $(2)$ of Lemma \ref{lemorbits3} and an argument analogous to the one above, we find $f_2\in \textup{Rstab}_{F_n}(\frac{n-1}{n^2},\frac{1}{n})$ such that $(x_i\cdot f_1)\cdot f_2=(y_i\cdot f_1)$ for each 
$1\leq i\leq k$. 
Now let $J$ be a nonempty open interval in $(0,1)$ such that $(\frac{n-1}{n^2},\frac{1}{n})\cap J=\emptyset$.
Again using proximality, we find $f_3\in F_n$ such that $(\frac{n-1}{n^2},\frac{1}{n})\cdot f_3\subset J$.
The commutator $g=f_3^{-1}f_2^{-1}f_3f_2$ also satisfies that $(x_i\cdot f_1)\cdot g=(y_i\cdot f_1)$ for each 
$1\leq i\leq k$. 
So $f=f_1gf_1^{-1}$ is the required element in $F_n'$ satisfying $x_i\cdot f=y_i$ for each 
$1\leq i\leq k$.
\end{proof}
We will also use the following.
\begin{lem}\label{GammaOrbits}
Let $f\in \Gamma_n,x\in [0,1]$ satisfy $y=x\cdot f\in [0,1]$. Then we have $\theta_{\eta_n}(x)=\theta_{\eta_n}(y)$.
\end{lem}

\begin{proof}
Switching $f$ to $f^{-1}$ and $x$ to $y$ if needed, assume that $z_1=0\cdot f\geq 0$.
Let $z_2\in [0,1]$ be such that $z_2\cdot f=1$.
Note that $1\cdot f=1+z_1$.
Define the homeomorphism $g:\mathbf{R}_{\geq 0}\to \mathbf{R}_{\geq 0}$ as:
$$t\cdot g:=\begin{cases}t\cdot f-z_1\text{ if }t\in [0,z_2]\\
\frac{(t\cdot f)-1}{n}+1-z_1\text{ if }z_2\leq t\leq 1\\
t+\frac{z_1}{n}-z_1\text{ if }t\geq 1
\end{cases}$$
By definition, $g$ is a piecewise linear map $\mathbf{R}_{\geq 0}\to \mathbf{R}_{\geq 0}$ with slopes in $\{(\eta_n)^k\mid k\in \mathbf{Z}\}$ and breakpoints in $\mathbf{Z}[\frac{1}{\eta_n}]$. Applying part $(2)$ of Lemma 
\ref{lemorbits2}, we get $\theta_{\eta_n}(p)=\theta_{\eta_n}(p\cdot g)$ for each $p\in \mathbf{R}_{\geq 0}$.
So, $\theta_{\eta_n}(1)=\theta_{\eta_n}(1\cdot g)= \theta_{\eta_n}(1+\frac{z_1}{n}-z_1)=\theta_{\eta_n}(1)+\theta_{\eta_n}(\frac{z_1}{n})-\theta_{\eta_n}(z_1)$
which implies $\theta_{\eta_n}(\frac{z_1}{n})=\theta_{\eta_n}(z_1)$ and so $\theta_{\eta_n}(nz_1)=\theta_{\eta_n}(z_1)$.
Therefore, $\theta_{\eta_n}((n-1)z_1)=0$.
Since $(n-1), \eta_n-1$ are coprime, it follows that $\theta_{\eta_n}(z_1)=0$.
For $x\in [0,z_2]$, we obtain $\theta_{\eta_n}(x)=\theta_{\eta_n}(x\cdot g)=\theta_{\eta_n}(x\cdot f-z_1)=\theta_{\eta_n}(x\cdot f)-\theta_{\eta_n}(z_1)=\theta_{\eta_n}(x\cdot f)=\theta_{\eta_n}(y)$.
\end{proof}
\subsection{Simplicity.} 

\begin{lem}\label{cornulier}
Let $\Gamma\leq \Gamma_n$ be a subgroup containing the $1$-periodic copy of $F_{\eta_n}'$ and acting minimally on $\mathbf{R}$.
If $N\leq \Gamma$ is a nontrivial normal subgroup, then the following holds:
\begin{enumerate}[itemsep=0pt,parsep=0pt]
\item $N$ contains the $1$-periodic copy of $F_{\eta_n}'$.
\item There is an $f\in N$ such that $0\cdot f\in (0,1)$.
\item $N$ acts minimally on $\mathbf{R}$. 
\item $\Gamma$ is generated by $\Gamma_{<1}=\{g\in \Gamma\mid 0\leq 0\cdot g<1\}$.
\item For each $f\in \Gamma$ such that $0\cdot f\in (0,1)$, there is a $g\in N$ such that $0\cdot fg^{-1}=0$.
\end{enumerate}
\end{lem}

\begin{proof}
{\bf Part $(1)$}. Fix a nontrivial $f\in N$. Then $f$ cannot be an integer translation, so there is an $x\in \mathbf{R}\setminus \mathbf{Z}$ such that $x\cdot f, x-x\cdot f\notin \mathbf{Z}$. We find a small open interval $U,|U|<1$ containing $x$ such that $(U+\mathbf{Z})\cdot f\cap (U+\mathbf{Z})=\emptyset$, $(U\cap \mathbf{Z})=\emptyset$, and $((U\cdot f)\cap \mathbf{Z})=\emptyset$. Let $g\in F_{\eta_n}'\leq \Gamma$ be a nontrivial element with support in $U+\mathbf{Z}$. Then $h=[g,f]$ is nontrivial and pointwise fixes $\mathbf{Z}$, so is in $(N\cap F_{\eta_n})\setminus \{1\}$. Since no nontrivial element of $F_{\eta_n}$ centralizes $F_{\eta_n}'$, there is a $k\in F_{\eta_n}'$ such that $[h,k]\in F_{\eta_n}'\leq \Gamma$ is nontrivial. Since $N$ is normal in $\Gamma$, it follows that $[h,k]\in N\cap F_{\eta_n}'$.
Since $F_{\eta_n}'$ is simple and $F_{\eta_n}'\cap N$ is normal in $F_{\eta_n}'$, it follows that $F_{\eta_n}'\leq N$.

{\bf Part $(2)$}. Using minimality, we choose $f\in \Gamma$ such that $0\cdot f\in (0,1)$. Let $U\subset (0,1)$ be an open interval such that $(U\cdot f^{-1})\subset (0,1)$.
Using the minimality  of $F_{\eta_n}'$ on $(0,1)$, we find $g\in F_{\eta_n}'\leq N$ such that $(0\cdot f)\cdot g\in U$. Then the required element in $N$ is $fgf^{-1}g^{-1}$,
since $0\cdot fgf^{-1}\in (0,1)$ and $(0,1)\cdot g^{-1}=(0,1)$.

{\bf Part $(3)$}. We apply Lemma \ref{generalminimal}, using the previous parts to verify the hypothesis.

{\bf Part $(4)$}. From our hypothesis, $F_{\eta_n}'\leq \Gamma_{<1}$ and there is an $f\in \Gamma_{<1}$ such that $0\cdot f\in (0,1)$.
Applying Lemma \ref{generalminimal}, the group $G$ generated by $\Gamma_{<1}$ acts minimally on $\mathbf{R}$.
So given $h\in \Gamma$, there is a $g\in G$ such that $(0\cdot h)\cdot g\in (0,1)$. Therefore, $hg\in G$ implies that $h\in G$, and hence $G=\Gamma$.

{\bf Part $(5)$}. From part $(2)$, choose $g_1\in N$ such that $0\cdot g_1\in (0,1)$.
By Lemma \ref{GammaOrbits} we know that $\theta_{\eta_n}(0)=\theta_{\eta_n}(0\cdot f)=\theta_{\eta_n}(0\cdot g_1)$. From Proposition \ref{orbits}, there is a $g_2\in F_{\eta_n}'\leq N$ satisfying that $0\cdot g_1=(0\cdot f)\cdot g_2$. It follows that $(0\cdot f)\cdot  g_2g_1^{-1}=0$ and hence $g=g_2g_1^{-1}$ is the required element.
\end{proof}






\begin{lem}\label{properquotient}
Let $\Gamma\leq \Gamma_n$ be a subgroup containing the $1$-periodic copy of $F_{\eta_n}'$ and acting minimally on $\mathbf{R}$. 
Then every proper quotient of $\Gamma$ is abelian. In particular $\Gamma'=\Gamma''$.
\end{lem}

\begin{proof}
Let $N\leq \Gamma$ be a normal subgroup and $\phi:\Gamma\to \Gamma/N$.
From Lemma \ref{cornulier} part $(1)$, $F_{\eta_n}'\leq N$, and so from Theorem \ref{abelianization} $\phi(F_{\eta_n})$ is abelian. So the proof reduces to the claim: $\phi(\Gamma)=\phi(F_{\eta_n}\cap \Gamma)$.
From part $(4)$ of Lemma \ref{cornulier}, $\Gamma_{<1}=\{g\in \Gamma\mid 0\leq 0\cdot g<1\}$ generates $\Gamma$.
Let $f\in \Gamma_{<1}$.
Using part $(5)$ of Lemma \ref{cornulier}, find $g\in N$ such that $(0\cdot f)\cdot g=0$.
It follows that $\phi(f)=\phi(fg)$ and $fg\in F_{\eta_n}$. So our claim follows.

Since $F_{\eta_n}'$ is not metabelian, $\Gamma$ is not metabelian,
i.e., $\Gamma/\Gamma''$ is a proper quotient of $\Gamma$. By the first assertion, $\Gamma/\Gamma''$ is abelian. This means that $\Gamma'=\Gamma''$.
\end{proof}

\begin{prop}\label{simplicity}
The group $Q_n$ is simple.
\end{prop}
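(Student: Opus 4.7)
The plan is to show that any nontrivial normal subgroup $N$ of $Q_n$ equals $Q_n$, via a commutator trick at a small interval producing the commutator subgroup of a rigid stabilizer inside $N$, followed by a generation argument.

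\textbf{Setup and commutator trick.} Given $1 \neq g \in N$: since $Q_n$ contains no integer translations (Remark \ref{remarktranslation}), $g$ acts nontrivially on $\mathbf{R}/\mathbf{Z}$, so there is an open interval $V$ with $(V + \mathbf{Z}) \cap (V \cdot g + \mathbf{Z}) = \emptyset$. By local contractivity of the $Q_n$-action (Corollary \ref{almostproximalQ}), after replacing $g$ by a conjugate $\tilde g = g^{h_0} \in N$, I may arrange that the corresponding interval $I$ has $|I| < 1/3$ and the same disjointness property. For any $f \in \textup{Rstab}_{Q_n}(I + \mathbf{Z})$, the element $[\tilde g, f] \in N$ decomposes, via disjoint supports, into an ``$(I + \mathbf{Z})$-part'' equal to $f$ and an ``$(I \cdot \tilde g + \mathbf{Z})$-part'' equal to $(f^{-1})^{\tilde g}$. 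For $f_1, f_2 \in \textup{Rstab}_{Q_n}(I + \mathbf{Z})$ this yields
\[
[[\tilde g, f_1],[\tilde g, f_2]] \;=\; [f_1, f_2] \cdot [f_1^{-1}, f_2^{-1}]^{\tilde g} \;\in\; N,
\]
and for $k \in \textup{Rstab}_{Q_n}(I + \mathbf{Z})$ (whose support is disjoint from $I \cdot \tilde g + \mathbf{Z}$, so $k$ commutes with the second factor),
\[
[[[\tilde g, f_1],[\tilde g, f_2]],\, k] \;=\; [[f_1, f_2], k] \;\in\; N.
\]
As $f_1, f_2, k$ range over $\textup{Rstab}_{Q_n}(I + \mathbf{Z})$, these elements generate $[\textup{Rstab}_{Q_n}(I + \mathbf{Z})', \textup{Rstab}_{Q_n}(I + \mathbf{Z})]$; and since the rigid stabilizer's derived subgroup is perfect (via conjugation to $F_{\eta_n}'$, which is perfect as a consequence of its simplicity by Theorem \ref{Fnpreliminaries}), this equals $\textup{Rstab}_{Q_n}(I + \mathbf{Z})'$. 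Hence $\textup{Rstab}_{Q_n}(I + \mathbf{Z})' \leq N$. A further application of Corollary \ref{almostproximalQ}, giving for every open $J$ with $|J| < 1$ some $h_J \in Q_n$ with $J \cdot h_J \subseteq I$, yields $\textup{Rstab}_{Q_n}(J + \mathbf{Z})' \leq N$ for every such $J$.

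\textbf{Generation.} By Definition \ref{Generators} and Remark \ref{stablegenerators}, $\Gamma_n$ is generated by $\mathcal{X}_n = X_{\eta_n} \cup \{\xi\}$ with every pairwise commutator stably supported. Consequently $Q_n = [\Gamma_n, \Gamma_n]$ is generated, as a subgroup, by $\Gamma_n$-conjugates $[a, b]^h = [a^h, b^h]$ for $a, b \in \mathcal{X}_n$ and $h \in \Gamma_n$. For each such generator, the set $(Supp(a) \cup Supp(b)) \cdot h$ has $\mathbf{Z}$-invariant image whose projection to $\mathbf{R}/\mathbf{Z}$ is a proper subinterval, hence is contained in $J + \mathbf{Z}$ for some $J$ with $|J| < 1$; then $a^h, b^h \in \textup{Rstab}_{\Gamma_n}(J + \mathbf{Z})$, so $[a, b]^h \in \textup{Rstab}_{\Gamma_n}(J + \mathbf{Z})'$. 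Since $\textup{Rstab}_{\Gamma_n}(J + \mathbf{Z})' \subseteq \textup{Rstab}_{Q_n}(J + \mathbf{Z})' \subseteq N$ by the previous step, every generator of $Q_n$ lies in $N$, whence $N = Q_n$.

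\textbf{Main obstacle.} The delicate point is the commutator-trick step: extracting $[[f_1, f_2], k]$ cleanly from the raw double commutator via the auxiliary commutation with $k$, and identifying the resulting family as generators of $[\textup{Rstab}_{Q_n}(I + \mathbf{Z})', \textup{Rstab}_{Q_n}(I + \mathbf{Z})]$, with its equality to $\textup{Rstab}_{Q_n}(I + \mathbf{Z})'$ resting on a perfectness statement inherited from Theorem \ref{Fnpreliminaries}. A secondary point is in the generation step: verifying that the stably-supported commutator $[a, b]$ and its $\Gamma_n$-conjugates can be captured inside a single rigid stabilizer $\textup{Rstab}_{\Gamma_n}(J + \mathbf{Z})$ of size $< 1$ that contains both $a^h$ and $b^h$, which requires that the ``ambient'' supports $Supp(a^h) \cup Supp(b^h)$ together still sit in such a $J + \mathbf{Z}$—a fact that should follow from the careful choice of $\xi$ so that $\mathcal{X}_n$-pairs have their combined support stably contained in a proper set on the circle.
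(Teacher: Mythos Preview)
Your argument is essentially correct, but it takes a noticeably more elaborate route than the paper's.

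The paper's proof uses a single clean identity: choosing $h_1,h_2\in F_{\eta_n}'$ with support in a small $U\subset(0,1)$ disjoint from $(U+\mathbf{Z})\cdot f$, one has $[h_1,[f,h_2]]=[h_1,h_2]$, which immediately places a nontrivial element of $F_{\eta_n}'$ in the normal closure of $f$. Simplicity of $F_{\eta_n}'$ then gives $F_{\eta_n}'\leq N$ in one stroke, and the two preparatory lemmas (Lemma~\ref{conjFnprime}: every $z\in\mathcal{Z}_n$ is $Q_n$-conjugate into $F_{\eta_n}'$; Lemma~\ref{NormalGenerationQ}: $\langle\langle\mathcal{Z}_n\rangle\rangle_{Q_n}=Q_n$) finish the job. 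Your approach instead passes through a triple commutator to obtain $[R',R]\leq N$ for $R=\textup{Rstab}_{Q_n}(I+\mathbf{Z})$, then needs the extra step that $R'$ is perfect to promote this to $R'\leq N$, and finally unrolls the generation argument by hand rather than invoking the two lemmas. What you gain is a self-contained argument that does not reference Lemmas~\ref{conjFnprime} and~\ref{NormalGenerationQ} explicitly; what you lose is economy.

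Two points that deserve tightening. First, your perfectness claim ``via conjugation to $F_{\eta_n}'$'' is slightly glib: $\textup{Rstab}_{Q_n}(I+\mathbf{Z})$ is not itself conjugate to $F_{\eta_n}$, but after moving $I$ into $(0,1)$ by an element of $Q_n$ (and taking $I$ with $\eta_n$-adic endpoints) it is sandwiched as $F_{\eta_n}'\leq R\leq F_{\eta_n}$, which forces $R'=F_{\eta_n}'$ and hence perfectness. Second, the inclusion $\textup{Rstab}_{\Gamma_n}(J+\mathbf{Z})'\subseteq\textup{Rstab}_{Q_n}(J+\mathbf{Z})'$ at the end is not automatic from containment of the groups (the smaller group $\textup{Rstab}_{Q_n}$ could a priori have a smaller derived subgroup); it again follows from perfectness of $\textup{Rstab}_{\Gamma_n}(J+\mathbf{Z})'\cong F_{\eta_n}'$, since then $\textup{Rstab}_{\Gamma_n}(J+\mathbf{Z})'=\textup{Rstab}_{\Gamma_n}(J+\mathbf{Z})''\leq \textup{Rstab}_{Q_n}(J+\mathbf{Z})'$. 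Both are easy to supply, and your verification that the relevant pairs $a,b\in\mathcal{X}_n$ have combined support in a proper arc is correct given the choice of $\xi$ in Definition~\ref{Generators}.
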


\begin{proof}
From Lemma \ref{properquotient} it follows that $\Gamma_n/[Q_n,Q_n]$ is abelian, implying that $Q_n=[Q_n,Q_n]$.
Let $N\leq Q_n$ be a nontrivial normal subgroup. Again, applying Lemma \ref{properquotient} (using the minimality of $Q_n$ on $\mathbf{R}$ from Lemma \ref{ourgroupminimal}), we obtain that $Q_n/N$ is abelian, which must be trivial since $Q_n=[Q_n,Q_n]$.
\end{proof}

\subsection{Finiteness properties.}

For $I\subseteq [0,1]$ a closed interval and $G\leq \textup{Homeo}^+[0,1]$, define $\textup{Rstab}_G^c(I)=\{f\in \textup{Rstab}_G(I)\mid \iinf(I)\cdot f'_+=\ssup(I)\cdot f'_-=1\}$.
Given a $1$-periodic group $G\leq \textup{Homeo}^+(\mathbf{R})$ and an interval $I\subset \mathbf{R}, |I|\leq 1$, 
define $\Upsilon_{G}(I)=\textup{RStab}_G(\Int(I)+\mathbf{Z})$ and $\Upsilon_{G}^c(I)=\{f\in \Upsilon_{G}(I)\mid \iinf(I)\cdot f'_+=\sup(I)\cdot f'_-=1\}$.

Let $G$ be a group and $N$ a subgroup such that $N$ is of type $\mathbf{F}_{\infty}$, $G$ is finitely generated, and $G'\leq N$.
Let $H$ satisfy that $N\leq H\leq G$. Then the pair $(N,G)$ is said to form a \emph{casing pair} for $H$.

\begin{lem}\label{casing}
If a group $H$ admits a casing pair $N,G$, then $H$ is of type $\mathbf{F}_{\infty}$.
\end{lem}
\begin{proof}
$N$ is normal in $G$ since $G'\leq N$.
Consider the extension $1\to N\to H\to H/N\to 1$.
Since $G'\leq N$ and $G$ is finitely generated, $H/N$ is a subgroup of the finitely generated abelian group $G/N$, hence is of type $\mathbf{F}_{\infty}$.
Since $N$ is also of type $\mathbf{F}_{\infty}$, from Proposition \ref{extensionfiniteness} it follows that $H$ is of type $\mathbf{F}_{\infty}$.
\end{proof}

\begin{lem}\label{FPlem1}
For each $a,b\in \mathbf{Z}[\frac{1}{\eta_n}], 0<b-a\leq 1$, there exists $f\in \Upsilon_{Q_n}([a,b])$ such that $a\cdot f_+',b\cdot f_-'=\eta_n$.
\end{lem}

\begin{proof}
First we consider the case when $b-a<1$. 
From Proposition \ref{ourgroupminimal}, there is an $f_1\in Q_n$ such that $(a,b)\cdot f_1\subset (0,1)$.
Using Proposition \ref{orbits}, we find $f_2\in F_{\eta_n}$ such that $a\cdot f_1 < (a\cdot f_1)\cdot f_2< b\cdot f_1< (b\cdot f_1)\cdot f_2$.
We find $h\in F_{\eta_n}$ with support in $(a\cdot f_1, (b\cdot f_1)\cdot f_2^{-1})+\mathbf{Z}$ whose right slope at $a\cdot f_1$ is $\eta_n$ and left slope at $(b\cdot f_1)\cdot f_2^{-1}$
is $\frac{1}{\eta_n}$. Thus $f=f_1(f_2^{-1}h^{-1} f_2h)f_1^{-1}=[f_2,h]^{f_1^{-1}}$ is the required element.
Next, assume the case $b=a+1$. Let $c\in (a,b)\cap \mathbf{Z}[\frac{1}{\eta_n}]$ and find such elements $f_1,f_2$ for $[a,c],[c,b]$, respectively, using the above.
It follows that $f=f_1f_2$ is the required element.
\end{proof}

\begin{lem}\label{FPlem2}
Consider the standard action of $F_{n}$ on $[0,1]$.
Let $I, J\subset (0,1)$ be closed intervals with endpoints in $\mathbf{Z}[\frac{1}{n}]$ such that $J\subset \Int(I)$. 
Then $\textup{Rstab}_{F_n}(I)'\cap \textup{Rstab}_{F_n}^c(J)=\textup{Rstab}_{F_n}(J)'$.
\end{lem}

\begin{proof}
Indeed, $\textup{Rstab}_{F_n}(J)'\subseteq \textup{Rstab}_{F_n}(I)'\cap \textup{Rstab}_{F_n}^c(J)$ is immediate. We show the reverse inclusion.
Let $I=[a,b]$, $g_1,g_2\in \textup{Rstab}_{F_n}(I)$ be such that $g=[g_1,g_2]\in \textup{Rstab}_{F_n}(I)'\cap \textup{Rstab}_{F_n}^c(J)$. Let $a_1,b_1\subset \Int(J)\cap \mathbf{Z}[\frac{1}{n}]$ be such that $\Supp(g)\subset (a_1,b_1)$.
From Proposition \ref{orbits} we find $h_1\in F_n$ so that $\inf(J)<a\cdot h_1<a_1<b_1<b\cdot h_1<\sup(J)$ and $a_1\cdot h_1=a_1,b_1\cdot h_1=b_1$.
Let $h_2\in F_n$ so that $h_2\restriction [a_1,b_1]=h_1\restriction [a_1,b_1]$ and $\Supp(h_2)\subseteq [a_1,b_1]$. 
Then $h=h_1h_2^{-1}$ satisfies that $x\cdot h=x \text{ for each }x\in (a_1,b_1)$, and $\inf(J)<a\cdot h<a_1<b_1<b\cdot h<sup(J)$.
We have $[g,h]=1$, and hence $g=g^h=[g_1^h,g_2^h]\in \textup{Rstab}_{F_n}(J)'$.
\end{proof}


\begin{lem}\label{newing}
For each $a,b\in \mathbf{Z}[\frac{1}{\eta_n}], 0<b-a\leq 1$, $\Upsilon_{\Gamma_n}([a,b])$ is finitely generated and $\Upsilon_{\Gamma_n}([a,b])'=\Upsilon_{\Gamma_n}([a,b])''$.
\end{lem}

\begin{proof}
If $b-a<1$, then from Lemma \ref{ourgroupminimal} there is an $f\in \Gamma_n$ such that $I=[a,b]\cdot f\subset (0,1)$.
It follows that $\Upsilon_{\Gamma_n}([a,b])\cong \Upsilon_{\Gamma_n}(I)=\Upsilon_{F_{\eta_n}}(I)\cong F_{\eta_n}$, where the last identification follows from Lemma \ref{Fnproximal}.
Our conclusion follows.
Now assume that $b=a+1$, and without loss of generality assume $b\in (0,1]$.
From Lemma \ref{FPlem1}, find $f\in \Upsilon_{\Gamma_n}([a,b])$ such that $a\cdot f_+'=\eta_n$.
Let $c\in (a,b)\cap \mathbf{Z}[\frac{1}{\eta_n}]$ be such that $x\cdot f>x, \forall x\in (a,c]$.
From Lemma \ref{AHNN}, $\Upsilon_{\Gamma_n}([a,b])=\langle f, \Upsilon_{\Gamma_n}([c,b])\rangle $ is an ascending HNN extension with base group $\Upsilon_{\Gamma_n}([c,b])\cong F_{\eta_n}$ (from the above, since $b-c<1$), hence is of type $\mathbf{F}_{\infty}$ from Proposition \ref{AHNNF}. 

Let $I=[a,b]$. We show that $\Upsilon_{\Gamma_n}(I)''=\Upsilon_{\Gamma_n}(I)'$ by showing the quotient $\phi: \Upsilon_{\Gamma_n}(I)\to \Upsilon_{\Gamma_n}(I)/\Upsilon_{\Gamma_n}(I)''$ is abelian.
Choose $k_1\in \mathbf{Z}[\frac{1}{\eta_n}]\cap (a,b)$.
Using Lemma \ref{FPlem1}, find $f\in \Upsilon_{\Gamma_n}([a,k_1])$ so that $a\cdot f_+'=\eta_n$.
Therefore, $\Upsilon_{\Gamma_n}(I)=\langle f,G\rangle$, where $G= \{g\in \Upsilon_{\Gamma_n}(I)\mid a\cdot g_+'=1\}$.
Let $k_2\in \mathbf{Z}[\frac{1}{\eta_n}]\cap (k_1,b)$. 
For $\gamma\in G$ we can find $\gamma_1\in \Upsilon_{\Gamma_n}(I)''$ so that $\gamma_1^{-1}\gamma \gamma_1\in \Upsilon_{\Gamma_n}([k_1,b])$.
It follows that $\phi(\Upsilon_{\Gamma_n}(I))=\phi(\langle f\rangle \oplus \Upsilon_{\Gamma_n}([k_1,b]))$.
Since every proper quotient of $\Upsilon_{\Gamma_n}([k_1,b])\cong F_{\eta_n}$ (see the proof of part $(1)$) is abelian, $\phi(\Upsilon_{\Gamma_n}(I))$ is abelian.
\end{proof}


\begin{lem}\label{FPlem3}
For all $\Gamma$ satisfying $Q_n\leq \Gamma\leq \Gamma_n$ and $I=[a,b]\subset \mathbf{R}$ with $|I|\leq 1$, 
$\Upsilon_{\Gamma}(I)$ is of type $\mathbf{F}_{\infty}$.
\end{lem}

\begin{proof}
Assume without loss of generality that $b\in (0,1]$.
Using Lemma \ref{FPlem1}, we find $f\in \Upsilon_{Q_n}(I)$ such that $a\cdot f_+',b\cdot f_-'>1$.
Using the second part of Proposition \ref{ourgroupminimal}, upon replacing $I$ by $I\cdot g$ (and $f$ by $f^g$) for some $g\in Q_n$ if needed, we can assume the following.
First, if $|I|<1$, then $I\subset (0,1)$. Moreover, there exists a closed interval $J\subset (0,1)\cap (a,b)$ with endpoints in $\mathbf{Z}[\frac{1}{\eta_n}]$ so that
$J\cdot f\subset J$ and $\bigcup_{n\in \mathbf{N}}J\cdot f^{-n}=(a,b)$.

We will prove the lemma by isolating a group $H\cong F_{\eta_n}$ such that $\langle f,H\rangle, \Upsilon_{\Gamma_n}(I)$ is 
a casing pair for $\Upsilon_{\Gamma}(I)$, which will imply that $\Upsilon_{\Gamma}(I)$ is of type $\mathbf{F}_{\infty}$ using Lemma \ref{casing}.

Let $J_0=J\cdot f$. We choose $s_i\in \Upsilon_{F_{\eta_n}'}(J)\leq \Upsilon_{\Gamma}(I)$ such that $\{J_i=J_0\cdot s_i\mid 1\leq i\leq \eta_{n}$\} 
are pairwise disjoint intervals in $J\setminus J_0$.
We know that $\Upsilon_{\Gamma_n}(J_0)\cong \textup{Rstab}_{F_{\eta_n}}(J_0)\cong F_{\eta_n}$ (the latter follows from Lemma \ref{Fnproximal}). 
Fix a generating set $u_1,...,u_{\eta_n}$ for $\Upsilon_{\Gamma_n}(J_0)$ and set $v_i=u_is_i^{-1}u_i^{-1}s_i$. Since the set of relators of $F_{\eta_n}$ in the prescribed generating set $u_1,...,u_{\eta_n}$ are all 
products of commutators, $H=\langle v_1,...,v_{\eta_n}\rangle$ also satisfies them and hence is isomorphic to $F_{\eta_n}$.
It follows that $H'=\Upsilon_{\Gamma_n}(J_0)'$.
Moreover, by definition, $H\leq \Upsilon_{\Gamma_n}(J)'\leq Q_n\leq \Gamma$, and so $\langle f, H\rangle\leq \Upsilon_{\Gamma}(I)$.

First, note that $f^{-1}Hf\subset H$, since each $f^{-1}v_if\in \Upsilon_{\Gamma_n}(J_0)'=H'$.
So by Lemma \ref{AHNN}, $\langle f,H\rangle$ is an ascending HNN extension of $H$, and hence of type $\mathbf{F}_{\infty}$ by Proposition \ref{AHNNF}.
Also, $\Upsilon_{\Gamma_n}(I)$ is of type $\mathbf{F}_{\infty}$ from Lemma \ref{newing}.
It remains to show that $\Upsilon_{\Gamma_n}(I)'\leq \langle f, H\rangle$, which reduces to:

{\bf Claim}: $\Upsilon_{\Gamma_n}(I)'\subseteq \bigcup_{n\in \mathbf{N}}f^{n}Hf^{-n}$.

{\bf Case $1$}: $|I|<1$. In this case, we assumed at the beginning that $I\subset (0,1)$.
Therefore, from Lemma \ref{FPlem2}, $\Upsilon_{\Gamma_n}(I)'\cap \Upsilon_{\Gamma_n}^c(J_0)=\Upsilon_{\Gamma_n}(J_0)'$, since $\Upsilon_{\Gamma_n}(L)=\textup{Rstab}_{F_{\eta_n}}(L+\mathbf{Z})$ for an interval $L\subset [0,1]$.
Indeed, for any $k\in \Upsilon_{\Gamma_n}(I)'$, there is an $n\in \mathbf{N}$ such that $f^{-n}kf^{n}\in \Upsilon_{\Gamma_n}^c(J_0)$, and so $f^{-n}kf^{n}\in \Upsilon_{\Gamma_n}(J_0)'=H'$.

{\bf Case $2$}: $|I|=1$. 
From Lemma \ref{newing}, $\Upsilon_{\Gamma_n}(I)''=\Upsilon_{\Gamma_n}(I)'$, ensuring that each element of $\Upsilon_{\Gamma_n}(I)'$ lies in 
$\Upsilon_{\Gamma_n}(I_1)'$ for some interval $I_1\subset I, |I_1|<1$.
The claim that is then proved by applying the same proof as for $|I|<1$ to each such $I_1$, one by one.
\end{proof}

For each $\Gamma$ such that $Q_n\leq \Gamma\leq \Gamma_n$ and $K\subset \mathbf{Z}[\frac{1}{\eta_n}]$, 
define $\Gamma_K=\{f\in \Gamma\mid k\cdot f=k, \forall k\in K\}$. For $K\subset \mathbf{Z}[\frac{1}{\eta_n}]/\mathbf{Z}$ and for the induced circle action of $\Gamma$ on $\mathbf{R}/\mathbf{Z}$,
define $\Gamma_{K,1}=\{f\in \Gamma\mid k\cdot f=k, \forall k\in K\}$.
(Equivalently, for $K\subset \mathbf{Z}[\frac{1}{\eta_n}]/\mathbf{Z}$, $\Gamma_{K,1}=\{f\in \Gamma\mid (k+\mathbf{Z})\cdot f=k+\mathbf{Z},\forall k\in K\}$ for the action on $\mathbf{R}$ .)

\begin{prop}\label{FPProp1}
For $\Gamma$ so that $Q_n\leq \Gamma\leq \Gamma_n$ and every nonempty finite set $K\subset \mathbf{Z}[\frac{1}{\eta_n}]$, $\Gamma_K=\Gamma_{K+\mathbf{Z}}$ is of type $\mathbf{F}_{\infty}$.
\end{prop}

\begin{proof}
Since $\Gamma$ is $1$-periodic, $\Gamma_{K}=\Gamma_{K+\mathbf{Z}}$. 
Using this, changing $K$ if needed and keeping $\Gamma_K$ fixed, suppose that $max(K)<min(K)+1$.
Thus, this provides $|K|$ 
(left closed, right open) intervals $L_1,...,L_{|K|}$ such that $L=\bigcup_{1\leq i\leq |K|}L_i$ is an interval whose closure has length $1$
and $L+\mathbf{Z}$ partitions $\mathbf{R}$.
Let $R=\prod_{1\leq i\leq |K|}\textup{Rstab}_{\Gamma_n}(L_j)$ and $R_1=\prod_{1\leq i\leq |K|}\textup{Rstab}_{\Gamma}(L_j)$.
From our hypothesis, it follows that $R'\subseteq R_1\subseteq \Gamma_K\subseteq R$.
Now $R_1,R$ are of type $\mathbf{F}_{\infty}$, from Lemma \ref{FPlem3} and Proposition \ref{extensionfiniteness}.
So they form a casing pair for $\Gamma_K$, which is henceforth of type $\mathbf{F}_{\infty}$ from Lemma \ref{casing}.
\end{proof}

\begin{thm}\label{FPmain}
Each $\Gamma$ satisfying $Q_n\leq \Gamma\leq \Gamma_n$ is of type $\mathbf{F}_{\infty}$.
\end{thm}

\begin{proof}
We consider the actions of $\Gamma, \Gamma_n$ on $X=\mathbf{Z}[\frac{1}{\eta_n}]/\mathbf{Z}$,
and the natural extension of this action to that on the simplicial complex whose $k$-simplices are the $(k+1)$-element subsets of $X$.
For each nonempty finite set $K\subset X$, $\Gamma_{K,1}$ is an extension of $\Gamma_{K+\mathbf{Z}}$ by a cyclic (trivial or infinite cyclic) group.
Since from Proposition \ref{FPProp1} $\Gamma_{K+\mathbf{Z}}$ is of type $\mathbf{F}_{\infty}$, it follows from applying Proposition \ref{extensionfiniteness} that $\Gamma_{K,1}$ is of type $\mathbf{F}_{\infty}$.
From Proposition \ref{orbits}, for each $k\in \mathbf{N}\setminus \{0\}$, the action of $F_{\eta_n}'$ on $(\mathbf{Z}[\frac{1}{\eta_n}])^k$ has finitely many orbits. Hence the same holds for $\Gamma$ since $F_{\eta_n}'\leq Q_n\leq \Gamma$.
Using Theorem \ref{brownscriterion}, we conclude that $\Gamma$ is of type $\mathbf{F}_{\infty}$.
\end{proof}

\begin{proof}[Proof of Theorem \ref{main}]
$Q_n$ is simple from Proposition \ref{simplicity}, and of type $\mathbf{F}_{\infty}$ from Theorem \ref{FPmain}.
\end{proof}




\bibliographystyle{abbrv}
\bibliography{references}

\begin{thebibliography}{10}

\bibitem{Alonso}
J.~M. Alonso.
\newblock Finiteness conditions on groups and quasi-isometries.
\newblock {\em J. Pure Appl. Algebra}, 95(2):121--129, 1994.

\bibitem{BDH}
G.~Baumslag, E.~Dyer, and A.~Heller.
\newblock The topology of discrete groups.
\newblock {\em J. Pure Appl. Algebra}, 16(1):1--47, 1980.

\bibitem{BrinHigher}
M.~G. Brin.
\newblock Higher dimensional {T}hompson groups.
\newblock {\em Geom. Dedicata}, 108:163--192, 2004.

\bibitem{brown}
K.~S. Brown.
\newblock Finiteness properties of groups.
\newblock {\em J. Pure Appl. Algebra}, 44(1-3):45--75, 1987.

\bibitem{BrownGeoghegan}
K.~S. Brown and R.~Geoghegan.
\newblock An infinite-dimensional torsion-free {${\rm FP}\sb{\infty }$} group.
\newblock {\em Invent. Math.}, 77(2):367--381, 1984.

\bibitem{BurgerMonod}
M.~Burger and N.~Monod.
\newblock Bounded cohomology of lattices in higher rank {L}ie groups.
\newblock {\em J. Eur. Math. Soc. (JEMS)}, 1(2):199--235, 1999.

\bibitem{BurgerMosesIHES}
M.~Burger and S.~Mozes.
\newblock Lattices in product of trees.
\newblock {\em Publ. Math. de l'IH\'ES}, 92:151--194, 2000.

\bibitem{BNR}
J.~Burillo, B.~Nucinkis, and L.~Reeves.
\newblock Irrational-slope versions of {T}hompson's groups {$T$} and {$V$}.
\newblock {\em Proc. Edinb. Math. Soc. (2)}, 65(1):244--262, 2022.

\bibitem{Witzel}
K.-U. Bux, M.~G. Fluch, M.~Marschler, S.~Witzel, and M.~C.~B. Zaremsky.
\newblock The braided {T}hompson's groups are of type {$\rm F_\infty$}.
\newblock {\em J. Reine Angew. Math.}, 718:59--101, 2016.
\newblock With an appendix by Zaremsky.

\bibitem{CFP}
J.~W. Cannon, W.~J. Floyd, and W.~R. Parry.
\newblock Introductory notes on {R}ichard {T}hompson's groups.
\newblock {\em Enseign. Math. (2)}, 42(3-4):215--256, 1996.

\bibitem{CapraceFujiwara}
P.-E. Caprace and K.~Fujiwara.
\newblock Rank-one isometries of buildings and quasi-morphisms of {K}ac-{M}oody
  groups.
\newblock {\em Geom. Funct. Anal.}, 19(5):1296--1319, 2010.

\bibitem{CapraceRemy}
P.-E. Caprace and B.~R\'{e}my.
\newblock Simplicity and superrigidity of twin building lattices.
\newblock {\em Invent. Math.}, 176(1):169--221, 2009.

\bibitem{Cornulier}
Y.~Cornulier.
\newblock Commensurating actions for groups of piecewise continuous
  transformations.
\newblock {\em Ann. H. Lebesgue}, 4:1457--1520, 2021.

\bibitem{GOD}
B.~Deroin, A.~Navas, and C.~Rivas.
\newblock Groups, orders, and dynamics.
\newblock {\em arXiv:1408.5805}, 2014.

\bibitem{Geoghegan}
R.~Geoghegan.
\newblock {\em Topological methods in group theory}, volume 243 of {\em
  Graduate Texts in Mathematics}.
\newblock Springer, New York, 2008.

\bibitem{GMSW}
R.~Geoghegan, M.~L. Mihalik, M.~Sapir, and D.~T. Wise.
\newblock Ascending {HNN} extensions of finitely generated free groups are
  {H}opfian.
\newblock {\em Bull. London Math. Soc.}, 33(3):292--298, 2001.

\bibitem{Higman}
G.~Higman.
\newblock Finitely presented infinite simple groups.
\newblock {\em Notes on Pure Mathematics, No. 8. Australian National
  University, Department of Pure Mathematics, Department of Mathematics,
  I.A.S., Canberra, 1974. vii+82 pp.}

\bibitem{HydeLodha}
J.~Hyde and Y.~Lodha.
\newblock Finitely generated infinite simple groups of homeomorphisms of the
  real line.
\newblock {\em Invent. Math.}, 218(1):83--112, 2019.

\bibitem{HLR}
J.~Hyde, Y.~Lodha, and C.~Rivas.
\newblock Two new families of finitely generated simple groups of
  homeomorphisms of the real line.
\newblock {\em J. Algebra}, 635:1--22, 2023.

\bibitem{Kourovka}
E.~Khukhro and V.~Mazurov.
\newblock Kourovka notebook, unsolved problems in group theory.
\newblock {\em arXiv:1401.0300v26}, 2022.

\bibitem{LodhaSimple}
Y.~Lodha.
\newblock A finitely presented infinite simple group of homeomorphisms of the
  circle.
\newblock {\em J. Lond. Math. Soc. (2)}, 100(3):1034--1064, 2019.

\bibitem{LMBT}
Y.~Lodha, N.~Matte~Bon, and M.~Triestino.
\newblock Property {FW}, differentiable structures and smoothability of
  singular actions.
\newblock {\em J. Topol.}, 13(3):1119--1138, 2020.

\bibitem{margulis}
G.~Margulis.
\newblock Free subgroups of the homeomorphism group of the circle.
\newblock {\em C. R. Acad. Sci. Paris S\'{e}r. I Math.}, 331(9):669--674, 2000.

\bibitem{MBT}
N.~Matte~Bon and M.~Triestino.
\newblock Groups of piecewise linear homeomorphisms of flows.
\newblock {\em Compos. Math.}, 156(8):1595--1622, 2020.

\bibitem{Navas}
A.~Navas.
\newblock {\em Groups of circle diffeomorphisms}.
\newblock University of Chicago Press, 2011.

\bibitem{Scott}
E.~Scott.
\newblock A tour around finitely presented infinite simple groups.
\newblock {\em Algorithms and classification in combinatorial group theory
  (Berkeley, CA, 1989), 83–119, Math. Sci. Res. Inst. Publ., 23, Springer,
  New York, 1992.}

\bibitem{SWZ}
R.~Skipper, S.~Witzel, and M.~C.~B. Zaremsky.
\newblock Simple groups separated by finiteness properties.
\newblock {\em Invent. Math.}, 215(2):713--740, 2019.

\bibitem{Stein}
M.~Stein.
\newblock Groups of piecewise linear homeomorphisms.
\newblock {\em Trans. Amer. Math. Soc. 332 (1992), no. 2, 477–514.}

\bibitem{ZaremskyOF}
M.~C.~B. Zaremsky.
\newblock Is there a simple group that is torsion-free, type
  $\textrm{F}_\infty$, and infinite dimensional?
\newblock URL:https://mathoverflow.net/q/376534 (version: 2020-11-15).

\end{thebibliography}

\noindent{\textsc{Department of Mathematics,
University of Copenhagen.}}

\noindent{\textit{E-mail address:} \texttt{jameshydemaths@gmail.com }}\\

\noindent{\textsc{Department of Mathematics,
University of \Hawaii at \Manoa.}}

\noindent{\textit{E-mail address:} \texttt{lodha@hawaii.edu}} \\

\end{document}